\newtheorem{theorem}{Theorem}[section]
\newtheorem{corollary}[theorem]{Corollary}
\newtheorem{proposition}[theorem]{Proposition}
\theoremstyle{definition}
\theoremstyle{remark}
\DeclareMathOperator{\Z}{\mathbb{Z}}
\DeclareMathOperator{\C}{\mathbb{C}}
\DeclareMathOperator{\m}{\mathrm{m}}
\DeclareMathOperator{\degg}{\mathrm{deg}}
\DeclareMathOperator{\D}{\mathrm{D}}
\DeclarePairedDelimiterX\set[1]\lbrace\rbrace{\def\given{\;\delimsize\vert\;}#1}
\begin{document}
\title[Ihara's formula for simple graphs with bounded degree]
{A generalized Ihara zeta function formula for simple graphs with bounded degree}

\author[T.~Kousaka]
{Taichi Kousaka}

\address{Graduate School of Mathematics, Kyushu University,
744 Motooka, Nishi-ku, Fukuoka, 819-0395 JAPAN}
\email{t-kosaka@math.kyushu-u.ac.jp}

\keywords{Ihara zeta function, determinant formula, local-spectram.}
\subjclass[2010]{05C38, 05C63, 11M36}

\begin{abstract}
We establish a generalized Ihara zeta function formula for simple graphs with bounded degree. 
This is a generalization of the formula obtained by G.~Chinta, J.~Jorgenson and A.~Karlsson from vertex-transitive graphs. 
\end{abstract}

\maketitle
\section{Introduction}
Let $X$ be a connected graph with bounded degree whose vertex set is countable and $\Delta_{X}$ be the combinatorial Laplacian on $X$. 
In this paper, a graph with bounded degree means a graph which has the above properties. 
The relationship between geometric properties of $X$ and the spectrum of $\Delta_{X}$ has been widely studied. 
Especially, for a finite regular graph, it is well-known that the distribution of the spectrum of $\Delta_{X}$ is deeply related to the number of closed geodesics (cf.~\cite{Terras}). 
The Ihara zeta function for a finite graph is defined by
\begin{align*}
Z_{X}(u)=\exp \bigg(\sum_{m=1}^{\infty}\frac{N_{m}}{m}u^{m} \bigg). 
\end{align*}
Here, we denote by $N_{m}$ the number of closed geodesics of length $m$ in $X$. 
This function is directly related to the number of closed geodesics. 
The original Ihara zeta function was first defined by Y.~Ihara in \cite{YI1966} as a Selberg-type zeta function in the $p$-adic setting. 
It can be interpreted in terms of finite regular graphs and has been generalized by T.~Sunada, K.~Hashimoto and H.~Bass (\cite{Bass1992}, \cite{HH89}, \cite{H89}, \cite{H90}, \cite{H92}, \cite{H93}, \cite{KS2000}, \cite{serre1}, \cite{sunada86}, \cite{sunada88}). 
There are various studies for the Ihara zeta function for a finite graph. 
The most famous and important formula for the Ihara zeta function for a finite graph is the Ihara determinant formula described as 
\begin{align*}
Z_{X}(u)^{-1}=(1-u^2)^{-\chi(X)}\det\big(I-u(D_{X}-\Delta_{X})+u^{2}(D_{X}-I)\big). 
\end{align*}
Here, we denote by $\chi(X)$ the Euler number of $X$ and by $\D_{X}$ the valency operator on $X$. 
The above formula for a finite regular graph was originally established by Y.~Ihara in the $p$-adic setting (\cite{YI1966}). 
Various proofs of this formula are well-known (cf.~\cite{Bass1992}, \cite{KS2000}). 
This formula can be interpreted as a formula describing a relationship between the number of closed geodesics and the spectrum of the Laplacian on a finite graph. 

\medskip
Recently, several authors have considered generalizations of the Ihara zeta function and the Ihara determinant formula from finite graphs to infinite graphs (cf.~\cite{CJK1}, \cite{CMS2001}, \cite{Cla2009}, \cite{Deit2015}, \cite{GZ2004}, \cite{GIL2008a}, \cite{GIL2008b}, \cite{Sch1999}). 
Among them, we follow \cite{CJK1} essentially. 
In \cite{CJK1}, for a regular graph, the Ihara zeta function is defined. 
The definition is complicated because we have to introduce another terminology besides closed geodesics (cf.~p.~$185$ in \cite{CJK1}). 
However, if a graph $X$ is a vertex-transitive graph, the Ihara zeta function defined in \cite{CJK1} has a natural form. 
Namely, for a vertex-transitive graph $X$, the Ihara zeta function of $X$ defined in \cite{CJK1} is  
\begin{align*}
\zeta_{X}(u)=\exp\bigg(\sum_{m=1}^{\infty} \frac{N_{m}(x_{0})}{m}u^{m}\bigg). 
\end{align*}
Here, we denote by $N_{m}(x_{0})$ the number of closed geodesics of length $m$ starting at a given vertex $x_{0}$. 
We remark that the above zeta function does not depend on the given vertex since $X$ is a vertex-transitive graph. 
We also remark that if a regular graph $X$ is not a vertex-transitive graph, the zeta function of $X$ defined in \cite{CJK1} does not always coincide with the above. 
The idea of giving a generalization of the Ihara zeta function from finite graphs to infinite graphs in \cite{CJK1} is to count not all closed geodesics but only count through a fixed starting vertex. 
The Ihara's formula for the zeta function defined in \cite{CJK1} is proved by giving a new expression of the heat kernel on a regular graph by using modified Bessel functions. 
The approach through heat kernel analysis is considered to be successful also from the view point of giving an analogy with quotients of rank one symmetric spaces. 
Therefore, we define the zeta function for a graph with bounded degree following \cite{CJK1}. 

\medskip
The aim of this paper is to continue the study about the relationship between the number of closed geodesics and the spectrum of the Laplacian on a graph. 
From this standpoint, for a graph $X$ with bounded degree and a vertex $x_{0} \in V\!X$, we define the Ihara zeta function as follows in this paper. 
\begin{align*}
Z_{X}(u, x_{0})=\exp\bigg(\sum_{m=1}^{\infty} \frac{N_{m}(x_{0})}{m}u^{m}\bigg). 
\end{align*}
We remark that the above zeta function depends on the vertex $x_{0}$. 
As mentioned in \cite{CJK1}, this generalization of the Ihara zeta function can be considered as corresponding to the Hurwitz zeta function which is generalized from the Riemann zeta function. 
Moreover, as we mentioned above, the Ihara zeta function for a graph $X$ with bounded degree is equal to the Ihara zeta function defined in \cite{CJK1} if $X$ is a vertex-transitive graph. 
However, our zeta function does not always coincide with the zeta function defined in \cite{CJK1} for a regular graph which is not a vertex-transitive graph. 

In this paper, we establish a generalized Ihara zeta function formula for connected simple graphs with bounded degree by using an algebraic method. 
Here, a simple graph means a graph which have no loops and multiple edges. 
This is a generalization of the formula for vertex-transitive graphs obtained by G.~Chinta, J.~Jorgenson and A.~Kerlsson in \cite{CJK1}. 
We remark that we establish the formula for connected simple graphs with bounded degree whereas G.~Chinta, J.~Jorgenson and A.~Karlsson establish the formula for connected vertex-transitive graphs which are not always simple graphs (\cite{CJK1}). 
Moreover, our proof also gives an alternative proof of the formula obtained by G.~Chinta, J.~Jorgenson and A.~Karlsson for simple vertex-transitive graphs. 
For finite simple regular graphs, we also derive a generalized Ihara zeta function formula which is regarded as a local version of the original Ihara determinant formula.  

\section{Preliminaries}
\subsection{Graphs and Paths}
In this section, we give terminology of graphs and paths used throughout in this paper (cf.~\cite{serre1}, \cite{sunada1}). 
A graph $X$ is an ordered pair $(V\!X, E\!X)$ of disjoint sets $V\!X$ and $E\!X$ with two maps, 
\begin{align*}
E\!X \rightarrow V\!X \times V\!X, e \mapsto (o(e), t(e)), \quad
E\!X \rightarrow E\!X , e \mapsto \bar{e}
\end{align*}
such that for each $e \in E\!X$, $\bar{e} \neq e$, $\bar{\bar{e}} = e$, $o(e)=t(\bar{e})$. 
For a graph $X=(V\!X, E\!X)$, two sets $V\!X$ and $E\!X$ are called vertex set and edge set respectively. 
For a vertex $x \in V\!X$, the {\it degree of $x$} is the cardinality of the set $E_{x}$, where $E_{x}=\{ e \in E\!X | o(e)=x\}$. We denote the degree of $x$ by $\degg(x)$. 
A graph $X$ is {\it countable} if the vertex set is countable. 
A graph $X$ {\it has bounded degree} if the supremum of the set of all degrees is not infinite. 
All graphs which are considered in this paper will be connected, countable and have bounded degree without degree one vertices. 
As we stated in Section $1$, a graph with bounded degree means a graph which has the above properties in this paper. 
For a graph $X$, a {\it path of length $n$} is a sequence of edges
\begin{align*}
c=(e_{1}, \dots, e_{n})
\end{align*}
such that $t(e_{i})=o(e_{i+1})$ for each $i$. 
We denote $o(e_{1})$ by $o(c)$, $t(e_{n})$ by $t(c)$ and the length of $c$ by $\ell(c)$. 
A path $c$ is {\it closed} if $o(c)=t(c)$. 
We regard a vertex as a path of length $0$. 
A path $c=(e_{1}, \dots, e_{n})$ {\it has a back-tracking} if there exist $i$ such that $e_{i+1}=\bar{e_{i}}$. A path $c=(e_{1}, \dots, e_{n})$ {\it has a tail} if $e_{n}=\bar{e_{1}}$. 
A path $c$ is a {\it geodesic} if $c$ has no back-tracking. 
A closed path $c=(e_{1}, \dots, e_{n})$ is a {\it geodesic loop} if $c$ has no back-tracking. 
A closed path $c=(e_{1}, \dots, e_{n})$ is a {\it closed geodesic path} if $c$ is a geodesic loop and has no tail. 

\subsection{The combinatorial Laplacian on a graph}
For the vertex set $V\!X$ of a graph $X$, we define {\it the $\ell^2$-space on the vertex set $V\!X$} by 
\begin{align*}
\ell^{2}(V\!X)=\set[\bigg]{ f \colon V\!X \rightarrow \C \given \sum_{x \in V\!X}|f(x)|^2 < +\infty }. 
\end{align*}
For a function $f \in \ell^2(V\!X)$ and a vertex $x \in V\!X$, we define the {\it adjacency operator} $A_{X}$ on $X$ and the {\it valency operator} $D_{X}$ on $X$ as follows respectively. 
\begin{align*}
(A_{X}f)(x)=\sum_{e \in E_{x}}f(t(e)), \\
(D_{X}f)(x)=\degg(x)f(x). 
\end{align*}
Then,  we define the {\it combinatorial Laplacian} $D_{X}$ on $X$ by $\Delta_{X}=D_{X}-A_{X}$. 
The Laplacian is a semipositive and self-adjoint bounded operator under our assumption. 

\section{a path counting formula}
In this section, we give an explicit formula between the number of geodesic loops and the number of closed geodesics. 
We remark that a graph which is considered in this section is allowed to have multiple edges and loops. 
This is a generalization of the path counting formula obtained in \cite{CJK1}. 
We fix a vertex $x_{0} \in V\!X$. 
For a vertex $x \in V\!X$ and a nonnegative integer $m$, 
we denote by $c_{m}(x_{0}, x)$ the number of geodesic paths of length $m$ from $x_{0}$ to $x$, by $c_{m}(x)=c_{m}(x, x)$ the number of geodesic loops of length $m$ starting at $x$ and by $N_{m}(x_{0})$ the number of closed geodesics of length $m$ starting at $x_{0}$. 
For a vertex $x \in V\!X$ and a nonnegative integer $m$, 
we denote $\degg(x)c_{m}(x)-\sum_{e \in E_{x}}c_{m}(t(e))$(resp.~$\degg(x)N_{m}(x)-\sum_{e \in E_{x}}N_{m}(t(e))$) by $(\Delta_{X}c_{m})(x)$(resp.~$(\Delta_{X}N_{m})(x))$ formally 
by regarding $c_{m}$ and $N_{m}$ as functions on $V\!X$. 
Here, we note that functions $c_{m}$ and $N_{m}$ are not in $\ell^{2}(V\!X)$ in general. 
Moreover, we define the following formal power series. 
\begin{align*}
C(u: x_{0})=\sum_{m=1}^{\infty}c_{m}(x_{0})u^m, \\
N(u: x_{0})=\sum_{m=1}^{\infty}N_{m}(x_{0})u^m. 
\end{align*}
Our goal in this section is to give the following theorem. 
\begin{theorem}\label{NformulaM}
The following formula holds: 
\begin{align*}
N(u: x_{0})=(1-u^2)^{-2}\{ 1-(\degg(x_{0})-\Delta_{X})u^2+(\degg(x_{0})-1)u^4\}C(u: \cdot)(x_{0}). 
\end{align*}
\end{theorem}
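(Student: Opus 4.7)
My plan is to prove the formula by a combinatorial tail-stripping argument combined with inclusion-exclusion, and then to assemble the resulting identities into a generating-function equation.

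Every geodesic loop $(e_{1},\dots,e_{m})$ at $x_{0}$ either satisfies $e_{m}\neq\bar{e}_{1}$ (a closed geodesic, counted by $N_{m}(x_{0})$) or satisfies $e_{m}=\bar{e}_{1}$ (a ``tail loop''). For $m\ge 3$, deleting the outer pair $e_{1},\bar{e}_{1}$ from a tail loop gives a geodesic loop of length $m-2$ at the neighbour $y=t(e_{1})$ whose first edge is not $\bar{e}_{1}$ and whose last edge is not $e_{1}$; writing $\hat{c}_{m-2}(y;e)$ for this avoidance-constrained count, this bijection yields
\[
c_{m}(x_{0})-N_{m}(x_{0})=\sum_{e\in E_{x_{0}}}\hat{c}_{m-2}(t(e);e).
\]

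The next step is to decompose $\hat{c}_{k}(y;e)$ by inclusion-exclusion into four pieces. Edge-reversal symmetry makes the two single-avoidance terms equal, so only three distinct quantities remain: $c_{k}(y)$, the single-avoidance count $\alpha_{k}(y;\bar{e})$, and the double-avoidance count $\alpha\beta_{k}(y;\bar{e},e)$. For $\alpha\beta_{k}(t(e);\bar{e},e)$, stripping \emph{both} outer edges produces a geodesic loop of length $k-2$ at $x_{0}$ with avoidance conditions on $e$; summing over $e\in E_{x_{0}}$ and splitting the inner loops into closed-geodesic vs.\ tail-loop subcases yields $\sum_{e}\alpha\beta_{k}(t(e);\bar{e},e)=(\degg(x_{0})-1)\,c_{k-2}(x_{0})-N_{k-2}(x_{0})$. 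For $\alpha_{k}(t(e);\bar{e})$, stripping $\bar{e}$ identifies it with a geodesic path from $x_{0}$ to $t(e)$ of length $k-1$ not starting with $e$; appending $\bar{e}$ and classifying whether this produces a valid geodesic loop at $x_{0}$ gives (for large enough $k$) the identity $\sum_{e}P_{k-1}(x_{0},t(e))=c_{k}(x_{0})+(\degg(x_{0})-1)\,c_{k-2}(x_{0})$, which in turn yields a two-term recurrence for the auxiliary series $A(u):=\sum_{k,e}\alpha_{k}(t(e);\bar{e})u^{k}$ of the form $(1-u^{2})A(u)=C(u:x_{0})[1+(\degg(x_{0})-1)u^{2}]-u^{2}\sum_{e\in E_{x_{0}}}C(u:t(e))$, up to low-order boundary terms.

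Substituting these reductions back into the identity of the first step gives an equation with denominators $1-u^{2}$ and $1-u^{4}$; clearing denominators and observing that a factor of $1+u^{2}$ cancels between both sides produces
\[
(1-u^{2})^{2}N(u:x_{0})=C(u:x_{0})+(\degg(x_{0})-1)u^{4}C(u:x_{0})-u^{2}\sum_{e\in E_{x_{0}}}C(u:t(e)).
\]
Rewriting $\degg(x_{0})C(u:x_{0})-\sum_{e\in E_{x_{0}}}C(u:t(e))$ as $\Delta_{X}C(u:\cdot)(x_{0})$ then yields the claimed formula. The main obstacle lies in the second step: closing the inclusion-exclusion system requires that the single-avoidance corrections iterate back onto the series $A(u)$ itself without introducing further auxiliary quantities, and the algebraic cancellation of the spurious factor $1+u^{2}$ at the end must be verified carefully. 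A secondary technical point is to handle the boundary cases ($m\le 2$ tail-stripping is degenerate, and counts for paths of length $0$ or $1$ must be treated separately), together with the extra bookkeeping caused by multiple edges and loops, which the section explicitly permits.
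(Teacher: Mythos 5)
Your proposal is correct and follows essentially the same route as the paper: the same tail-stripping identity, the same four-term inclusion--exclusion for the avoidance-constrained loop count, and the same double-stripping reduction of the tailed loops to $(\degg(x_{0})-1)c_{k-2}(x_{0})-N_{k-2}(x_{0})$, assembled into the generating-function identity $(1-u^{2})^{2}N(u:x_{0})=\{1+(\degg(x_{0})-1)u^{4}\}C(u:x_{0})-u^{2}\sum_{e\in E_{x_{0}}}C(u:t(e))$. The only difference is presentational: the paper first solves the recursion to get a closed formula for $N_{m}(x_{0})$ (its Proposition \ref{Nformula}) and then passes to power series, whereas you carry the auxiliary single-avoidance series $A(u)$ directly at the generating-function level; both verifications (including the cancellation of the factor $1+u^{2}$) check out.
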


First of all, we give the following proposition. 
\begin{proposition}\label{Nformula}
For an integer $m$ greater than $2$, the following identity holds: 
\begin{align*}
N_{m}(x_{0})=c_{m}(x_{0})-(\degg(x_{0})-2)\sum_{i=1}^{\lceil \frac{m}{2} \rceil-1}c_{m-2i}(x_{0})
			+\sum_{i=1}^{\lceil \frac{m}{2} \rceil-1}i(\Delta_{X}c_{m-2i})(x_{0}). 
\end{align*}
Here, the symbol $\lceil \cdot \rceil$ stands for the ceiling function. 
\end{proposition}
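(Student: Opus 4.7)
My plan is to derive a four-term linear recursion for $N_m(x_0)$ via a tail-peeling bijection combined with inclusion-exclusion, and then extract the explicit formula by induction on $m$.

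First, for $m > 2$ I would observe that a geodesic loop $c = (e_{1}, \ldots, e_{m})$ at $x_0$ fails to be a closed geodesic path exactly when it has a tail, i.e., $e_{m} = \bar{e_{1}}$. Stripping the outermost matched pair $(e_{1}, \bar{e_{1}})$ sets up a bijection
\[
c_m(x_0) - N_m(x_0) = \sum_{e \in E_{x_{0}}} \widetilde{c}_{m-2}(t(e); e),
\]
where $\widetilde{c}_n(y; e)$ counts geodesic loops at $y$ of length $n$ whose first edge is not $\bar{e}$ and whose last edge is not $e$. Inclusion-exclusion decomposes $\widetilde{c}_n(y; e) = c_n(y) - \alpha_n(y; e) - \beta_n(y; e) + \gamma_n(y; e)$ for the three forbidden configurations (wrong first edge, wrong last edge, both), and the edge-reversal involution on geodesic loops matches $\alpha_n = \beta_n$.

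The crucial step is evaluating the correction sums $\Sigma_\alpha(n) := \sum_{e \in E_{x_{0}}} \alpha_n(t(e); e)$ and $\Sigma_\gamma(n) := \sum_{e \in E_{x_{0}}} \gamma_n(t(e); e)$ in closed form. Each reduces, after peeling off the forced boundary edges and partitioning the residual count according to the presence or absence of a tail on a length-$(n-2)$ geodesic loop at $x_0$, to the clean identities
\[
\Sigma_\alpha(n) = N_n(x_0) + (\degg(x_0)-1)c_{n-2}(x_0) - N_{n-2}(x_0), \qquad \Sigma_\gamma(n) = \Sigma_\alpha(n) - N_n(x_0),
\]
with the convention $c_j = N_j = 0$ for $j \le 0$. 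Combined with the Laplacian identity $\sum_{e \in E_{x_{0}}} c_n(t(e)) = \degg(x_0) c_n(x_0) - (\Delta_X c_n)(x_0)$, substitution collapses the relation into the four-term recursion
\[
N_m - 2 N_{m-2} + N_{m-4} = c_m - \degg(x_0) c_{m-2} + (\Delta_X c_{m-2})(x_0) + (\degg(x_0)-1) c_{m-4}.
\]

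Finally, the proposition's explicit formula follows from this recursion by induction on $m$, with base cases $N_1(x_0) = c_1(x_0)$ and $N_2(x_0) = c_2(x_0)$ (no geodesic loop of length at most $2$ has a tail). Substituting the inductive expressions for $N_{m-2}$ and $N_{m-4}$ and collecting coefficients produces exactly $-(\degg(x_0) - 2)$ in front of each $c_{m-2i}(x_0)$ and $i$ in front of each $(\Delta_X c_{m-2i})(x_0)$, summed over $1 \le i \le \lceil m/2 \rceil - 1$. The main obstacle is the evaluation of $\Sigma_\alpha$ and $\Sigma_\gamma$: the individual summands carry constraints with no compact description, and the uniform closed form they share emerges only after summing over $E_{x_0}$ and invoking the tail/no-tail dichotomy on the inner length-$(n-2)$ loops, which is the combinatorial heart of the argument.
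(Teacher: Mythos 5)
Your proposal is correct and follows essentially the same route as the paper: strip the tail pair, apply inclusion--exclusion on the forbidden first/last inner edges, evaluate the doubly-peeled sum via the tail/no-tail dichotomy on length-$(m-4)$ loops, and arrive at the identical four-term recursion $N_m-2N_{m-2}+N_{m-4}=c_m-\degg(x_0)c_{m-2}+(\Delta_Xc_{m-2})(x_0)+(\degg(x_0)-1)c_{m-4}$, which is then solved against the base cases $N_1=c_1$, $N_2=c_2$. The only cosmetic differences are that the paper handles $m=3,4$ as separate base cases rather than via your convention $c_j=N_j=0$ for $j\le 0$ (note this deliberately differs from the natural value $c_0=1$, but it is the right normalization to make the recursion uniform), and that it telescopes the recursion rather than phrasing the last step as an induction.
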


\begin{proof}
First of all, we define several symbols. For a non-negative integer $m$, a vertex $x \in V\!X$ and an edge $e \in E\!X$ such that $o(e)=x$ or $t(e)=x$, 
we denote by $c_{m}(x, e)$(resp.~$N_{m}(x, e)$) the number of geodesic loops (resp.~closed geodesics) of length $m$ starting at $x$ through $e$. 
Let $m$ be an integer which is greater than $2$ and $e \in E\!X$ be an edge such that $o(e)=x_{0}$. 
The number $c_{m}(x_{0}, e)-N_{m}(x_{0}, e)$ is equal to the number of geodesic loops of length $m$ starting at $x_{0}$ through $e$, which are not closed geodesics. 
Therefore, we have 
\begin{align*}
c_{m}(x_{0}, e)-N_{m}(x_{0}, e)&=c_{m-2}(t(e))-c_{m-2}(t(e), \bar{e})-c_{m-2}(t(e), e)
	+cwt_{m-2}(t(e), \bar{e}).  
\end{align*}
Here, we denote by $cwt_{m-2}(t(e), \bar{e})$ 
the number of geodesic loops of length $m-2$ starting at $t(e)$ with tail $\bar{e}$. 
By this, we have 
\begin{align*}
	&c_{m}(x_{0})-N_{m}(x_{0})\\
	&=\sum_{e \in E_{x_{0}}}\{ c_{m-2}(t(e))-c_{m-2}(t(e), \bar{e})-c_{m-2}								(t(e), e)+cwt_{m-2}(t(e), \bar{e}) \} \\
	&=\degg(x_{0})c_{m-2}(x_{0})-(\Delta_{X}c_{m-2})(x_{0})\\
	   &\quad -\sum_{e \in E_{x_{0}}}\{c_{m-2}(t(e), \bar{e})-N_{m-2}(t(e), \bar{e})\}
	      -\sum_{e \in E_{x_{0}}}\{c_{m-2}(t(e), e)-N_{m-2}(t(e), e)\} \\
	    &\quad-\sum_{e \in E_{x_{0}}}\{N_{m-2}(t(e), \bar{e})+N_{m-2}(t(e), e)\}
								+\sum_{e \in E_{x_{0}}}cwt_{m-2}(t(e), \bar{e}) . 
\end{align*}
Further, 
\begin{align*}
	&\sum_{e \in E_{x_{0}}}\{c_{m-2}(t(e), \bar{e})-N_{m-2}(t(e), \bar{e})\}\\
	&=\sum_{e \in E_{x_{0}}}\{c_{m-2}(t(e), e)-N_{m-2}(t(e), e)\}\\
	&= cwt_{m-2}(t(e), \bar{e}) 
\end{align*}
and 
\begin{align*}
	\sum_{e \in E_{x_{0}}}N_{m-2}(t(e), \bar{e})
	=\sum_{e \in E_{x_{0}}}N_{m-2}(t(e), e)
	=\sum_{e \in E_{x_{0}}}N_{m-2}(x_{0}, e)=N_{m-2}(x_{0}). 
\end{align*}
Then, we have 
\begin{align}\label{ceq1}
&c_{m}(x_{0})-N_{m}(x_{0})\nonumber \\
&=\degg(x_{0})c_{m-2}(x_{0})-(\Delta_{X}c_{m-2})(x_{0})-2N_{m-2}(x_{0})
					-\sum_{e \in E_{x_{0}}}cwt_{m-2}(t(e), \bar{e}). 
\end{align}
Putting $m=3$ in $(\ref{ceq1})$, since $cwt_{1}(t(e), \bar{e})=0$, we have 
\begin{align*}
c_{3}(x_{0})-N_{3}(x_{0})=\degg(x_{0})c_{1}(x_{0})-(\Delta_{X}c_{1})(x_{0})-2N_{1}(x_{0}). 
\end{align*}
Therefore, 
\begin{align}\label{N3}
N_{3}(x_{0})=c_{3}(x_{0})-(\degg(x)-2)c_{1}(x_{0})+(\Delta_{X}c_{1})(x_{0}). 
\end{align}
By the same argument, we have
\begin{align*}
N_{4}(x_{0})=c_{4}(x_{0})-(\degg(x_{0})-2)c_{2}(x_{0})+(\Delta_{X}c_{2})(x_{0}). 
\end{align*}
In the case $m \geq 5$, by $(\ref{ceq1})$, we have 
\begin{align*}
&c_{m}(x_{0})-N_{m}(x_{0})\\
&=\degg(x_{0})c_{m-2}(x_{0})-(\Delta_{X}c_{m-2})(x_{0})-2N_{m-2}(x_{0})\\
&\quad-\{N_{m-4}(x_{0})\degg(x_{0})-2)+(c_{m-4}-N_{m-4}(x_{0}))(\degg(x_{0})-1)\} \\
&=\degg(x_{0})c_{m-2}(x_{0})-(\Delta_{X}c_{m-2})(x_{0})\\
&\quad-2N_{m-2}(x_{0})-(\degg(x_{0})-1)c_{m-4}(x_{0})+N_{m-4}(x_{0}). 
\end{align*}
Therefore, we have the following recursive formula. 
\begin{align*}
&\{N_{m}(x_{0})-N_{m-2}(x_{0})\}-\{N_{m-2}(x_{0})-N_{m-4}(x_{0})\} \\
&=\{c_{m}(x_{0})-c_{m-2}(x_{0})\}-(\degg(x_{0})-1)\{c_{m-2}(x_{0})-c_{m-4}(x_{0})\}
+(\Delta_{X}c_{m-2})(x_{0}). 
\end{align*}
For $m \geq 5$ which is an odd integer, we have 
\begin{align*}
&N_{m}(x_{0})-N_{m-2}(x_{0})\\
&=c_{m}(x_{0})-(\degg(x_{0})-1)c_{m-2}(x_{0})+(N_{3}(x_{0})-c_{3}(x_{0}))\\
&\quad+(\degg(x_{0})-1)c_{1}(x_{0})-N_{1}(x_{0})+\sum_{i=1}^{\frac{m-3}{2}}(\Delta_{X}c_{m-2i})(x_{0}). 
\end{align*}
Here, we used the above recursive formula in the above equation. 
By this and $(\ref{N3})$, we have
\begin{align*}
N_{m}(x_{0})=c_{m}(x_{0})-(\degg(x_{0})-2)\sum_{i=1}^{\frac{m-1}{2}}c_{m-2i}(x_{0})
			+\sum_{i=1}^{\frac{m-1}{2}}i(\Delta_{X}c_{m-2i})(x_{0}). 
\end{align*}
For $m \geq 6$ which is an even integer, by the same argument, we have
\begin{align*}
N_{m}(x_{0})=c_{m}(x_{0})-(\degg(x_{0})-2)\sum_{i=1}^{\frac{m-2}{2}}c_{m-2i}(x_{0})
			+\sum_{i=1}^{\frac{m-2}{2}}i(\Delta_{X}c_{m-2i})(x_{0}). 
\end{align*}
\end{proof}

In the rest of this section, we give the proof of Theorem \ref{NformulaM}. 
We define $R_{m}(x_{0})$ and $\tilde{R}_{m}(x_{0})$ as follows. 
\begin{align*}
R_{m}(x_{0})=\sum_{i=1}^{\lceil \frac{m}{2} \rceil-1}i(\Delta_{X}c_{m-2i})(x_{0}), \\
\tilde{R}_{m}(x_{0})=\begin{cases}
				R_{1}(x_{0})	& \text{if $m=1$, } \\
				R_{2}(x_{0})	& \text{if $m=2$, } \\
				R_{m}(x_{0})-R_{m-2}(x_{0})	& \text{if $m \geq 3$. }
				\end{cases}
\end{align*}
We define the corresponding formal power series as follows. 
\begin{align*}
R(u: x_{0})=\sum_{m=1}^{\infty}R_{m}(x_{0})u^m, \\
\tilde{R}(u: x_{0})=\sum_{m=1}^{\infty}\tilde{R}_{m}(x_{0})u^m. 
\end{align*}
By the definition of $R_{m}(x_{0})$ and $R_{m}(x_{0})$, we have 
\begin{align}\label{Rformula}
R(u: x_{0})=u^2(1-u^2)^{-2}\Delta_{X}C(u: x_{0}). 
\end{align}
Moreover, for vertices $x_{0}, x \in V\!X$, we define $b_{m}(x_{0})$ as follows. 
\begin{align*}
b_{m}(x)=\begin{cases}
			c_{0}(x_{0}, x)	& \text{if $m=0$, } \\
			c_{1}(x_{0}, x)	& \text{if $m=1$, } \\
			c_{2}(x_{0}, x)	& \text{if $m=2$, } \\
			c_{m}(x_{0}, x)-(\degg(x)-2)\sum_{j=1}^{\lceil \frac{m}{2} \rceil-1}c_{m-2j}(x_{0}, x)
						& \text{if $m \geq 3$. } \\
		\end{cases}
\end{align*}
By the definition of this symbol, we have 
\begin{align}\label{Bformula}
B(u: x_{0})=(1-u^2)^{-1}\{1-(\degg(x_{0})-1)u^2\}C(u: x_{0}). 
\end{align}
By Proposition \ref{Nformula}, we have 
\begin{align*}
N(u: x_{0})=C(u: x_{0})-(C(u: x_{0})-B(u: x_{0}))+R(u: x_{0})=B(u: x_{0})+R(u: x_{0}). 
\end{align*}
By (\ref{Rformula}) and (\ref{Bformula}), we have 
\begin{align*}
N(u: x_{0})&=(1-u^2)^{-1}\{1-(\degg(x_{0})-1)u^2\}C(u: x_{0})
			+u^{2}(1-u^{2})^{-2}\Delta_{X}C(u: \cdot )(x_{0}) \\
		&=(1-u^2)^{-2}\{1-(\degg(x_{0})-\Delta_{X})u^2
					+(\degg(x_{0})-1)u^4\}C(u: \cdot)(x_{0}). 
\end{align*}
Therefore, we get Theorem \ref{NformulaM}. 

\section{A generalized Ihara zeta function formula for simple graphs with bounded degree}
In this section, we give a generalized Ihara zeta function formula for a simple graph with bounded degree. 
Let $X$ be a connected simple graph with bounded degree. 
We denote the supremum of all degrees of $X$ by $M$. 
We remark that $M$ is greater than $1$ by our assumption. 
We denote the set of all bounded operators on $\ell^{2}(V\!X)$ equipped with the usual operator norm $\lVert \cdot \rVert$ by $ \mathcal{B}(\ell^{2}(V\!X))$. 

First of all, we introduce several bounded operators on $\ell^{2}(V\!X)$. 
For $f \in \ell^{2}(V\!X)$ and $m \in \Z_{\geq 0}$, we define $C_{m}$ as follows. 
\begin{align*}
C_{m}f(x)=\sum_{c \in \mathcal{C}_{x}, \ell(c)=m} f(t(c)). 
\end{align*}
Here, the symbol $\mathcal{C}_{x}$ stands for the set of all geodesic paths of length $m$ starting at $x$. 
We define $Q_{X}$ by $D_{X}-I$ and $B_{m}$ as follows. 
\begin{align*}
B_{m}=\begin{cases}
		C_{m}-(Q-I)\sum_{j=1}^{\lfloor \frac{m}{2} \rfloor}C_{m-2j} & \text{if $m \geq 3$, } \\
		C_{m}	& \text{if $m=0, 1, 2$. }
	     \end{cases} 
\end{align*}
For $f \in \ell^{2}(V\!X)$, we define $R_{m}$ as follows. 
\begin{align*}
(R_{m}f)(x)=\begin{cases}
		\sum_{j=1}^{\lceil \frac{m}{2} \rceil-1}j(\Delta_{X}c_{m-2j})(x)f(x)	 & \text{if $m \geq 3$, } \\
		0	& \text{if $m=0, 1, 2$. }
	     \end{cases}
\end{align*}
Moreover, we define $R_{m}^{+}$ and $N_{X, m}$ as follows. 
\begin{align*}
R_{m}^{+}=\begin{cases}
		(Q_{X}-I)\delta_{2\Z}(m)+R_{m} & \text{if $m \geq 3$, } \\
		0	& \text{if $m=0, 1, 2$}, 
	     \end{cases}
\end{align*}
\begin{align*}
N_{X, m}=B_{m}+R_{m}^{+}. 
\end{align*}
We remark that the above operators are in $\mathcal{B}(\ell^{2}(V\!X))$ since $X$ has a bounded degree. 
For $B \in \mathcal{B}(\ell^{2}(V\!X))$ and for $x_{0}, x \in V\!X$, we define $B(x_{0}, x)$ as follows. 
\begin{align*}
B(x_{0}, x)=B\delta_{x_{0}}(x). 
\end{align*}
Here, the symbol $\delta_{x_{0}}$ stands for the Kronecker delta. 
We remark that $B(x_{0}, x)$ is in $\C$ by the Cauchy-Schwarz inequality since $B$ is in $\mathcal{B}(\ell^{2}(V\!X))$. 

Then, we have the following proposition. 
\begin{proposition}$($\cite{GIL2008a}$)$\label{C_{m}}
We have the following equation: 
\begin{align*}
C_{m}=\begin{cases}
		C_{1}^{2}-Q-I	& \text{if $m=2$, } \\
		C_{m-1}C_{1}-C_{m-2}Q	& \text{if $m \geq 3$. }
	     \end{cases}
\end{align*}
Let $\alpha=\frac{M+\sqrt{M^{2}+4M}}{2}$. Then, for $m \in \Z_{\geq 0}$, we have 
\begin{align*}
\lVert C_{m} \rVert \leq \alpha^{m}. 
\end{align*}
Moreover, for $\left| u \right| < \frac{1}{\alpha}$, we have the following equations: 
\begin{enumerate}[label={\rm (\arabic*)}]
\item $\bigg(\sum_{m=0}^{\infty}C_{m}u^m\bigg)\big(I-uA_{X}+u^{2}Q_{X}\big)=(1-u^{2})I$. 
\item $\bigg(\sum_{m=0}^{\infty}\big(\sum_{k=0}^{\lfloor \frac{m}{2} \rfloor}C_{m-2k}\big)u^{m}\bigg)
	\big(I-uA_{X}+u^{2}Q_{X}\big)=I$. 
\end{enumerate}
\end{proposition}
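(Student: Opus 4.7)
The plan is to establish the three claims of the proposition in order: first the operator recursion, then the norm bound by induction, and finally the two generating-function identities as formal consequences of the preceding steps.

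For the recursion I unpack $(C_{m-1}C_{1}f)(x)$ combinatorially as the double sum of $f(t(e))$ over geodesics $c$ of length $m-1$ starting at $x$ and edges $e$ with $o(e) = t(c)$. Each pair $(c,e)$ is a length-$m$ walk from $x$ whose first $m-1$ edges form a geodesic, so either the walk itself is a geodesic---contributing $(C_{m}f)(x)$---or $e = \bar{e}_{m-1}$ is a backtracking edge. The simple-graph hypothesis is used here to guarantee that this backtracking extension is unique. Its terminus $o(e_{m-1})$ equals the endpoint of the geodesic $c'$ of length $m-2$ obtained from $c$ by dropping the last edge, and the number of length-$(m-1)$ geodesic prolongations of a given $c'$ is $\degg(t(c')) - 1$. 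Re-indexing over $c'$ produces the correction term $(C_{m-2}Q_{X}f)(x)$, whence $C_{m-1}C_{1} = C_{m} + C_{m-2}Q_{X}$ for $m \geq 3$. The same analysis with $c'$ the length-zero path at $x$ handles $m=2$: the $\degg(x)$ backtracking pairs contribute $D_{X} = Q_{X} + I$, giving $C_{1}^{2} = C_{2} + Q_{X} + I$.

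The norm bound is proved by induction on $m$, using the identity $\alpha^{2} = M\alpha + M$, which follows directly from $\alpha = \frac{M+\sqrt{M^{2}+4M}}{2}$. The base cases $\lVert C_{0}\rVert = 1$ and $\lVert C_{1}\rVert = \lVert A_{X}\rVert \leq M \leq \alpha$ are immediate. For $m \geq 3$ the recursion, together with $\lVert Q_{X}\rVert \leq M - 1$ and the inductive hypothesis, yields
\[
\lVert C_{m}\rVert \;\leq\; M\alpha^{m-1} + (M-1)\alpha^{m-2} \;\leq\; \alpha^{m-2}(M\alpha + M) \;=\; \alpha^{m},
\]
while $m=2$ is handled directly by the triangle inequality applied to $C_{1}^{2} - Q_{X} - I$.

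This bound ensures that $T(u) := \sum_{m\geq 0} C_{m} u^{m}$ converges absolutely in operator norm for $|u| < 1/\alpha$, legitimizing term-by-term manipulations. To prove (1), I compute $T(u)(I - uA_{X} + u^{2}Q_{X})$ and collect coefficients of $u^{k}$: the recursion $C_{m} = C_{m-1}C_{1} - C_{m-2}Q_{X}$ kills every coefficient with $k \geq 3$; the $m=2$ recursion collapses the $u^{2}$-coefficient to $-I$; the $u^{1}$-coefficient vanishes from $C_{1} - C_{0}C_{1}$; and the constant term is $I$, so the product equals $(1-u^{2})I$. For (2), the Cauchy-product identity $\sum_{m\geq 0}\bigl(\sum_{k=0}^{\lfloor m/2\rfloor}C_{m-2k}\bigr)u^{m} = T(u)/(1-u^{2})$ lets me obtain (2) by dividing (1) through by $1-u^{2}$. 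The main obstacle in the whole argument is the combinatorial bookkeeping in the recursion; once the unique identification of the backtracking edge (which is exactly the simple-graph input) is secured, everything else reduces to a standard induction and a formal power-series computation.
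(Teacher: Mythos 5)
Your proof is correct. The paper itself gives no argument for this proposition---it is quoted from the reference [GIL2008a] (Guido--Isola--Lapidus)---and your derivation (combinatorial decomposition of $C_{m-1}C_{1}$ into geodesic and backtracking extensions, induction on $m$ using $\alpha^{2}=M\alpha+M$, and coefficient-matching plus division by $1-u^{2}$ for the two identities) is exactly the standard route taken there. One small remark: the uniqueness of the backtracking edge $e=\bar{e}_{m-1}$ is automatic in the edge formalism of Serre used here and does not actually require simplicity, so that attribution is harmless but not where the simple-graph hypothesis does any work.
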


By Proposition \ref{C_{m}}, we have the following proposition. 

\begin{proposition}\label{N_{m}}
\begin{enumerate}[label={\rm (\arabic*)}]
\item For $\left| u \right| < \frac{1}{\alpha}$, we have 
	\begin{align*}
	\bigg(\sum_{m=1}^{\infty}B_{m}u^m\bigg)\big(I-uA_{X}+u^{2}Q_{X}\big)
	=A_{X}u-2Q_{X}u^2+\big(Q-I\big)\big(I-uA_{X}+u^{2}Q_{X}\big)u^2. 
	\end{align*} 
\item For $\left| u \right| < \frac{1}{\alpha}$, we have 
	\begin{align*}
	\sum_{m=1}^{\infty}N_{X, m}u^m=u(A_{X}-2Q_{X}u)\big(I-uA_{X}+u^{2}Q_{X}\big)^{-1}
			+(Q_{X}-I)\frac{u^2}{1-u^2}+\sum_{m=3}^{\infty}R_{m}u^m.
		\end{align*}
\end{enumerate}

\end{proposition}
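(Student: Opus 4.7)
The plan is to derive both statements by manipulating generating series of the operators $C_m$ and invoking the two identities in Proposition~\ref{C_{m}}. Throughout I work in $\mathcal{B}(\ell^{2}(V\!X))$ for $|u|<1/\alpha$, where all series below converge absolutely in operator norm thanks to the bound $\|C_{m}\|\le\alpha^{m}$.

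For Part~(1), set
\begin{align*}
C(u):=\sum_{m=0}^{\infty}C_{m}u^{m}, \qquad S(u):=\sum_{m=0}^{\infty}\bigg(\sum_{k=0}^{\lfloor m/2\rfloor}C_{m-2k}\bigg)u^{m}.
\end{align*}
A direct inspection of the low-degree coefficients yields
\begin{align*}
S(u)-C(u)=u^{2}I+\sum_{m=3}^{\infty}\sum_{j=1}^{\lfloor m/2\rfloor}C_{m-2j}\,u^{m},
\end{align*}
the $u^{2}I$ recording the only discrepancy (at $m=2$). Combined with the piecewise definition of $B_{m}$, this gives
\begin{align*}
\sum_{m=1}^{\infty}B_{m}u^{m}=\bigl(C(u)-I\bigr)-(Q_{X}-I)\bigl(S(u)-C(u)-u^{2}I\bigr).
\end{align*}
Right-multiplying by $T(u):=I-uA_{X}+u^{2}Q_{X}$ and substituting $C(u)T(u)=(1-u^{2})I$ and $S(u)T(u)=I$ from Proposition~\ref{C_{m}} collapses the right side to an explicit polynomial in $u$; rearranging via $D_{X}+Q_{X}-I=2Q_{X}$ (immediate from $Q_{X}=D_{X}-I$) puts it into the desired form $A_{X}u-2Q_{X}u^{2}+(Q_{X}-I)T(u)u^{2}$.

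For Part~(2), multiply Part~(1) on the right by $T(u)^{-1}=(1-u^{2})^{-1}C(u)$, available because $\alpha>1$ forces $|u|<1/\alpha<1$. This produces
\begin{align*}
\sum_{m=1}^{\infty}B_{m}u^{m}=u(A_{X}-2Q_{X}u)T(u)^{-1}+(Q_{X}-I)u^{2}.
\end{align*}
I then add $\sum_{m=3}^{\infty}R_{m}^{+}u^{m}$: splitting $R_{m}^{+}$ into its two ingredients, the indicator piece $(Q_{X}-I)\delta_{2\Z}(m)$ contributes $(Q_{X}-I)\sum_{k\ge 2}u^{2k}=(Q_{X}-I)u^{4}/(1-u^{2})$, which fuses with the leftover $(Q_{X}-I)u^{2}$ from Part~(1) to produce $(Q_{X}-I)u^{2}/(1-u^{2})$, while the remaining pieces give exactly $\sum_{m\ge 3}R_{m}u^{m}$.

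The main obstacle is not analytic but combinatorial: $B_{m}$ and $R_{m}^{+}$ are defined piecewise, so the boundary terms at $m\in\{0,1,2\}$ and the even/odd split in the definition of $R_{m}^{+}$ must be tracked with care. Once those low-order corrections are identified correctly, the two identities in Proposition~\ref{C_{m}} reduce everything to routine algebra.
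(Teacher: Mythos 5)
Your computation is correct and is precisely the verification the paper leaves to the reader (``It is easy to check by Proposition \ref{C_{m}}''): both parts follow from the two generating-series identities of Proposition \ref{C_{m}} by exactly the bookkeeping of the $m\in\{0,1,2\}$ boundary terms and the even/odd split that you carry out, and the identities $(C(u)-I)T(u)=uA_{X}-u^{2}D_{X}$ and $D_{X}+Q_{X}-I=2Q_{X}$ reassemble into the stated right-hand side. The one point to tighten is the inversion of $T(u)=I-uA_{X}+u^{2}Q_{X}$: Proposition \ref{C_{m}}(1) only exhibits $(1-u^{2})^{-1}\sum_{m}C_{m}u^{m}$ as a left inverse, so rather than appealing to $|u|<1$ you should note (as the paper does just before Proposition \ref{f(u)}) that $\lVert uA_{X}-u^{2}Q_{X}\rVert<1$ for $|u|<1/\alpha$, so that $T(u)$ is invertible by the Neumann series and the left inverse necessarily coincides with the two-sided inverse.
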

It is easy to check by Proposition \ref{C_{m}}. Therefore, we omit the proof of Proposition \ref{N_{m}} 

Let $f$ be a $C^{1}$-function on $B_{\epsilon}=\{ u \in \C \big| \left| u \right| < \epsilon  \}$ which takes the value to bounded operators on a Hilbert space and satisfies $f(0)=0$, $\lVert f(u) \rVert < 1$ for any $u \in B_{\epsilon}$. 
Here, $\lVert \cdot \rVert$ stands for the operator norm on this Hilbert space. 
Then, for $u \in B_{\epsilon}$, we have 
\begin{align*}
-\log(I-f(u))=\sum_{n=1}^{\infty}\frac{1}{n}f(u)^{n}. 
\end{align*}
Here, the above series converges in operator norm, uniformly on compact subsets of $B_{\epsilon}$. 
By this, we have 
\begin{align*}
-\frac{d}{du}\log(I-f(u))=\sum_{n=1}^{\infty}\frac{1}{n}\sum_{j=0}^{n-1}f(u)^{j}f^{'}(u)f(u)^{n-j-1}. 
\end{align*}

Let $f(u)=A_{X}u-Q_{X}u^2$. We remark that $\left| u \right| < \frac{1}{\alpha}$ implies $\lVert f(u) \rVert < 1$. 
Then, we have the following proposition. 
\begin{proposition}\label{f(u)}
For $\left| u \right| < \frac{1}{\alpha}$, we have 
\begin{align*}
f^{'}(u)(I-f(u))^{-1}=-\frac{d}{du}\log(I-f(u))
	+u^2\sum_{n=1}^{\infty}\frac{1}{n}\sum_{j=1}^{n-1}jf(u)^{n-1-j}[A_{X}, Q_{X}]f(u)^{j-1}. 
\end{align*}
Here, $[A_{X}, Q_{X}]=A_{X}Q_{X}-Q_{X}A_{X}$. 
\end{proposition}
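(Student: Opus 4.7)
The plan is to start from the series identity displayed just before the proposition,
\[
-\frac{d}{du}\log(I-f(u))=\sum_{n=1}^{\infty}\frac{1}{n}\sum_{j=0}^{n-1}f(u)^{j}f'(u)f(u)^{n-j-1},
\]
and move every factor of $f'(u)$ to the left. In the commutative case each inner summand would equal $f'(u)f(u)^{n-1}$, the sum would telescope to $nf'(u)f(u)^{n-1}$, and the whole series would collapse to $f'(u)(I-f(u))^{-1}$. So the task is to measure exactly how the noncommutativity of $A_X$ and $Q_X$ obstructs this collapse.

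First I would compute the basic commutator. A direct expansion of $[f(u),f'(u)]=[A_{X}u-Q_{X}u^{2},\,A_{X}-2Q_{X}u]$ yields, after the two nonzero brackets $[A_{X}u,-2Q_{X}u]$ and $[-Q_{X}u^{2},A_{X}]$ cancel partially,
\[
[f(u),f'(u)]=-u^{2}[A_{X},Q_{X}].
\]
Next I would iterate this via the standard Leibniz-type identity
\[
[f(u)^{j},f'(u)]=\sum_{k=0}^{j-1}f(u)^{k}[f(u),f'(u)]f(u)^{j-1-k}=-u^{2}\sum_{k=0}^{j-1}f(u)^{k}[A_{X},Q_{X}]f(u)^{j-1-k},
\]
which of course vanishes when $j=0$.

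Then I would substitute $f(u)^{j}f'(u)=f'(u)f(u)^{j}+[f(u)^{j},f'(u)]$ into the double sum, obtaining
\[
\sum_{j=0}^{n-1}f(u)^{j}f'(u)f(u)^{n-j-1}=nf'(u)f(u)^{n-1}-u^{2}\sum_{j=1}^{n-1}\sum_{k=0}^{j-1}f(u)^{k}[A_{X},Q_{X}]f(u)^{n-2-k}.
\]
Changing the order of summation in the double sum (for each fixed $k\in\{0,\dots,n-2\}$, $j$ runs over $k+1,\dots,n-1$, contributing the multiplicity $n-1-k$) gives
\[
\sum_{j=1}^{n-1}\sum_{k=0}^{j-1}f(u)^{k}[A_{X},Q_{X}]f(u)^{n-2-k}=\sum_{k=0}^{n-2}(n-1-k)f(u)^{k}[A_{X},Q_{X}]f(u)^{n-2-k},
\]
and the reindexing $j=n-1-k$ rewrites this as $\sum_{j=1}^{n-1}jf(u)^{n-1-j}[A_{X},Q_{X}]f(u)^{j-1}$, which is exactly the inner sum appearing on the right-hand side of the claim.

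Finally, recognizing $\sum_{n=1}^{\infty}f'(u)f(u)^{n-1}=f'(u)(I-f(u))^{-1}$ (the series converges in operator norm for $|u|<1/\alpha$ because $\lVert f(u)\rVert<1$ there), dividing by $n$ and summing in $n$ yields the asserted identity after rearrangement. The only mild obstacle is justifying the change of order of summation and the rearrangement of terms, but this is routine since all series converge absolutely in operator norm on the open disc $|u|<1/\alpha$; everything else is bookkeeping on indices and a single commutator calculation.
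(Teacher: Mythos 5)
Your proposal is correct and follows essentially the same route as the paper: both start from the series for $-\frac{d}{du}\log(I-f(u))$, compute $[f(u),f'(u)]=-u^{2}[A_{X},Q_{X}]$, commute $f'(u)$ to the left of each term, and reindex the resulting double sum to obtain the stated correction term. The only difference is presentational, in that you spell out the iterated commutator identity and the change of summation order that the paper leaves implicit.
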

\begin{proof}
By the previous remark, for $\left| u \right| < \frac{1}{\alpha}$, we have 
\begin{align*}
-\frac{d}{du}\log(I-f(u))=\sum_{n=1}^{\infty}\frac{1}{n}\sum_{j=0}^{n-1}f(u)^{j}f^{'}(u)f(u)^{n-j-1}. 
\end{align*}
By straightforward calculation, we have 
\begin{align*}
[f(u), f^{'}(u)]=(Q_{X}A_{X}-A_{X}Q_{X})u^2. 
\end{align*}
Therefore, we get 
\begin{align*}
f(u)f^{'}(u)=f^{'}(u)f(u)+[Q_{X}, A_{X}]u^2. 
\end{align*}
By this equation, we have 
\begin{align*}
\sum_{j=0}^{n-1}f(u)^{j}f^{'}(u)f(u)^{n-1-j}=nf^{'}(u)f(u)^{n-1}
	+u^{2}\sum_{j=1}^{n-1}jf(u)^{n-1-j}[Q_{X}, A_{X}]f(u)^{j-1}. 
\end{align*}
Then, we have 
\begin{align*}
-\frac{d}{du}\log(I-f(u))&=\sum_{n=1}^{\infty}\frac{1}{n}\big( nf^{'}(u)f(u)^{n-1}
	+u^{2}\sum_{j=1}^{n-1}jf(u)^{n-1-j}[Q_{X}, A_{X}]f(u)^{j-1}\big)\\
				&=f^{'}(u)\sum_{n=1}^{\infty}f(u)^{n-1}
				+u^2\sum_{n=1}^{\infty}\frac{1}{n}\sum_{j=1}^{n-1}jf(u)^{n-1-j}[Q_{X}, A_{X}]f(u)^{j-1}\\
				&=f^{'}(u)(I-f(u))^{-1}
				 +u^2\sum_{n=1}^{\infty}\frac{1}{n}\sum_{j=1}^{n-1}jf(u)^{n-1-j}[Q_{X}, A_{X}]f(u)^{j-1}. 
\end{align*}
\end{proof}
By Proposition \ref{N_{m}} and Proposition \ref{f(u)}, for $\left| u \right| < \frac{1}{\alpha}$, we have 
\begin{align*}
u\frac{d}{du}\sum_{m=1}^{\infty}\frac{N_{X, m}}{m}u^{m}
&=-u\frac{d}{du}\log(I-f(u))
	+u^3\sum_{n=1}^{\infty}\frac{1}{n}\sum_{j=1}^{n-1}jf(u)^{n-1-j}[A_{X}, Q_{X}]f(u)^{j-1}\\
	&\quad+(Q_{X}-I)\frac{u^2}{1-u^2}+u\frac{d}{du}\sum_{m=3}^{\infty}\frac{R_{m}}{m}u^m. 
\end{align*}
Dividing by $u$ and integrating from $u=0$ to $u$, we have
\begin{align*}
\sum_{m=1}^{\infty}\frac{N_{X, m}}{m}u^m
&=-\log(I-f(u))-\frac{Q_{X}-I}{2}\log(1-u^2)\\
&\quad+\int_{0}^{u}z^2\sum_{n=1}^{\infty}\frac{1}{n}\sum_{j=1}^{n-1}jf(z)^{n-1-j}[A_{X}, Q_{X}]f(z)^{j-1}dz
+\sum_{m=3}^{\infty}\frac{R_{m}}{m}u^m. 
\end{align*}
Therefore, for $x_{0}, x \in V\!X$, we have 
\begin{align}\label{N_{x_{0}, x}} \nonumber
&\sum_{m=1}^{\infty}\frac{N_{X, m}(x_{0}, x)}{m}u^m\\ \nonumber
&=-[\log(I-A_{X}u+Q_{X}u^2)](x_{0}, x)-\frac{\degg(x_{0})-2}{2}\delta{x_{0}}(x)\log(1-u^2)\\
&\quad+\int_{0}^{u}z^2\sum_{n=1}^{\infty}
	\frac{1}{n}\sum_{j=1}^{n-1}j\big[f(z)^{n-1-j}[A_{X}, Q_{X}]f(z)^{j-1}\big](x_{0}, x)dz
	+\sum_{m=3}^{\infty}\frac{R_{m}(x_{0}, x)}{m}u^m. 
\end{align}
We define $Z_{X}(u, x_{0}, x)$ as follows. 
\begin{align*}
Z_{X}(u, x_{0}, x)=\exp\bigg(\sum_{m=1}^{\infty}\frac{N_{X, m}(x_{0}, x)}{m}u^m \bigg)
\end{align*}
We remark that $Z_{X}(u, x_{0}, x_{0})=Z_{X}(u, x_{0})$ by Proposition \ref{Nformula}. 
Then, we have the following theorem by (\ref{N_{x_{0}, x}}). 
\begin{theorem}\label{Ihara formula}
For $\left| u \right| < \frac{1}{\alpha}$, we have 
\begin{align*}
Z_{X}(u, x_{0}, x)&=(1-u^2)^{-\frac{\degg(x_{0})-2}{2}\delta_{x_{0}}(x)}\\
	&\quad \times \exp\big(-[\log(I-(D_{X}-\Delta_{X})u+(D_{X}-I)u^2)](x_{0}, x)\big) \\
		&\quad \times \exp\bigg( \int_{0}^{u}z^2\sum_{n=1}^{\infty}
	\frac{1}{n}\sum_{j=1}^{n-1}j\big[f(z)^{n-1-j}[A_{X}, D_{X}]f(z)^{j-1}\big](x_{0}, x)dz\bigg)\\
	&\quad \times \exp\bigg(\sum_{m=3}^{\infty}\frac{R_{m}(x_{0}, x)}{m}u^m\bigg). 
\end{align*}
\end{theorem}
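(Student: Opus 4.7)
The identity (\ref{N_{x_{0}, x}}) derived just above already does essentially all the work, so the proof is mostly a matter of exponentiating and translating notation.

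First I would observe that by definition $Z_{X}(u, x_{0}, x) = \exp\bigl(\sum_{m \geq 1} m^{-1} N_{X,m}(x_{0}, x) u^m\bigr)$, so the theorem's left hand side is simply $\exp$ of the left hand side of (\ref{N_{x_{0}, x}}). Applying $\exp$ to the four summands on the right of (\ref{N_{x_{0}, x}}) produces four multiplicative factors which I would then match against the four factors appearing in the statement.

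Second, I would translate from $A_{X}, Q_{X}$ to $D_{X}, \Delta_{X}$. The substitutions $A_{X} = D_{X} - \Delta_{X}$ and $Q_{X} = D_{X} - I$ turn $I - A_{X} u + Q_{X} u^2$ into $I - (D_{X} - \Delta_{X}) u + (D_{X} - I) u^2$, so the first summand of (\ref{N_{x_{0}, x}}) exponentiates to the second factor of the theorem. The operator $Q_{X} - I = D_{X} - 2I$ acts on $\delta_{x_{0}}$ as scalar multiplication by $\degg(x_{0}) - 2$ at $x_{0}$ (and vanishes elsewhere), so $\bigl[\tfrac{Q_{X} - I}{2} \delta_{x_{0}}\bigr](x) = \tfrac{\degg(x_{0}) - 2}{2} \delta_{x_{0}}(x)$, and exponentiating this $\log(1 - u^2)$ term produces the first factor. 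For the commutator in the integral, $[A_{X}, Q_{X}] = [A_{X}, D_{X} - I] = [A_{X}, D_{X}]$, which matches the third factor exactly; the fourth factor involving $R_m$ is immediate.

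The real obstacle is not in this final step but in the rigorous manipulation of operator-valued $\exp$, $\log$, and the term-by-term integration needed to arrive at (\ref{N_{x_{0}, x}}). Once that identity is in hand, and once one observes that Proposition \ref{C_{m}} ensures $\lVert f(u) \rVert < 1$ for $\left|u\right| < 1/\alpha$ so that all the series converge in operator norm uniformly on compact subsets of $\{\left|u\right| < 1/\alpha\}$ (which justifies the formal exponentiation pointwise after evaluating at $(x_{0}, x)$ via the Cauchy--Schwarz bound), the theorem reduces to the notational bookkeeping described above.
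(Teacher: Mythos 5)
Your proposal is correct and follows essentially the same route as the paper: the theorem is obtained by exponentiating the identity (\ref{N_{x_{0}, x}}) termwise and rewriting $A_{X}=D_{X}-\Delta_{X}$, $Q_{X}=D_{X}-I$, $[A_{X},Q_{X}]=[A_{X},D_{X}]$, with the $(Q_{X}-I)\delta_{x_{0}}$ evaluation giving the exponent $\frac{\degg(x_{0})-2}{2}\delta_{x_{0}}(x)$ exactly as you describe. The paper likewise treats this step as immediate from (\ref{N_{x_{0}, x}}), so no further argument is needed.
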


In particular, if $X$ is a $(q+1)$-regular graph, we have 
\begin{align*}
I-(D_{X}-\Delta_{X})u+Q_{X}u^2=I-\big((q+1)I-\Delta_{X}\big)u+qIu^2. 
\end{align*}
Since $\Delta_{X}$ is a self-adjoint bounded operator, there exists a unique spectral measure such that 
\begin{align*}
\Delta_{X}=\int_{\sigma(\Delta_{X})}\lambda dE(\lambda). 
\end{align*}
Here, $\sigma(\Delta_{X})$ stands for the spectram of $\Delta_{X}$. 
For $x_{0}, x \in V\!X$, we denote the measure $\langle E(\cdot)\delta_{x_{0}}, \delta_{x} \rangle$ by $\mu_{x_{0}, x}(\cdot)$. 
Then, by the property of the spectral integral, we have the following corollary by Theorem \ref{Ihara formula}. 
\begin{corollary}\label{Ihara formula for a regular graph}
For $\left| u \right| < \frac{1}{\alpha}$, we have 
\begin{align*}
Z_{X}(u, x_{0}, x)&=(1-u^2)^{-\frac{q-1}{2}\delta_{x_{0}}(x)}\\
&\quad \times \exp\bigg(-\int_{\sigma(\Delta_{X})}\log(1-\big((q+1)-\lambda \big)u+qu^2)d\mu_{x_{0}, x}(\lambda)\bigg) \\
	&\quad \times \exp\bigg( \sum_{m=3}^{\infty}\frac{R_{m}(x_{0}, x)}{m}u^m\bigg). 
\end{align*}
\end{corollary}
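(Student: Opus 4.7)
The plan is to deduce the corollary directly from Theorem \ref{Ihara formula} by specializing the operators to the $(q+1)$-regular setting, where $D_X=(q+1)I$ collapses to a scalar and kills the commutator term, leaving only the spectral-theoretic factor to be rewritten via the functional calculus for $\Delta_X$.

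First I would record the algebraic simplifications. Since $X$ is $(q+1)$-regular, one has $D_X=(q+1)I$, hence $Q_X=qI$, $A_X=D_X-\Delta_X=(q+1)I-\Delta_X$, and $\deg(x_0)=q+1$ for every vertex. Therefore the scalar exponent in the first factor of Theorem \ref{Ihara formula} becomes $-\tfrac{(q+1)-2}{2}\delta_{x_0}(x)=-\tfrac{q-1}{2}\delta_{x_0}(x)$, which matches the exponent appearing in the corollary. The $R_m(x_0,x)$ factor is untouched and appears verbatim in the conclusion.

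Next I would eliminate the commutator integral. Because $D_X$ is a scalar multiple of the identity, it commutes with every bounded operator; in particular
\begin{align*}
[A_X,D_X]=0,
\end{align*}
so each summand $f(z)^{n-1-j}[A_X,D_X]f(z)^{j-1}$ in the third exponential of Theorem \ref{Ihara formula} vanishes identically, and the entire third exponential factor equals $1$. This is the main structural reason the formula simplifies so drastically in the regular case.

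It remains to rewrite the middle factor via the spectral theorem. The operator $I-(D_X-\Delta_X)u+(D_X-I)u^2=I-((q+1)I-\Delta_X)u+qIu^2$ is a polynomial in the self-adjoint bounded operator $\Delta_X$, and for $|u|<1/\alpha$ the series $-\log(I-f(u))=\sum_{n\ge1}\tfrac{1}{n}f(u)^n$ converges in operator norm, so $\log(I-(D_X-\Delta_X)u+(D_X-I)u^2)$ is defined by the continuous functional calculus applied to the scalar function $\lambda\mapsto\log(1-((q+1)-\lambda)u+qu^2)$ on $\sigma(\Delta_X)$. Using the spectral resolution $\Delta_X=\int_{\sigma(\Delta_X)}\lambda\,dE(\lambda)$ and pairing with $\delta_{x_0}$ and $\delta_x$ then gives
\begin{align*}
[\log(I-(D_X-\Delta_X)u+(D_X-I)u^2)](x_0,x)=\int_{\sigma(\Delta_X)}\log\bigl(1-((q+1)-\lambda)u+qu^2\bigr)\,d\mu_{x_0,x}(\lambda),
\end{align*}
which is exactly the middle exponential in the corollary. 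Substituting into Theorem \ref{Ihara formula} yields the stated identity.

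The only real subtlety is justifying that the spectral representation of the logarithm agrees with the operator-valued logarithm defined by the norm-convergent series; this is standard once one verifies $\|f(u)\|<1$ for $|u|<1/\alpha$, which was already noted in the discussion preceding Proposition \ref{f(u)}, so there is no serious obstacle.
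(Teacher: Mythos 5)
Your proposal is correct and follows essentially the same route as the paper: specialize Theorem \ref{Ihara formula} to the $(q+1)$-regular case and convert the operator logarithm into a spectral integral against $\mu_{x_{0},x}$ via the functional calculus. You are in fact more explicit than the paper in noting that $[A_{X},D_{X}]=0$ kills the commutator factor, a step the paper leaves implicit but which is exactly what is needed.
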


Before we consider the case that $X$ is a finite $(q+1)$-regular graph, we introduce the notion of the local spectrum (\cite{Fiol}). 
For a vertex $x \in V\!X$, we denote $x$-local multiplicity of $\lambda_{i}$ by $\m_{x}(\lambda_{i})$. 
Here, the {\it$x$-local multiplicity} of $\lambda_{i}$ is the $xx$-entry of the primitive idempotent $E_{\lambda_{i}}$. Let $\{\mu_{0}=\lambda_{0}, \mu_{1}, \dots, \mu_{d_{x}}\}$ be the set of eigenvalues whose local multiplicities are positive. 
For each vertex $x \in V\!X$, we denote the $x$-local spectrum by $\sigma_{x}(X)$. 
Here, the {\it $x$-local spectrum} is $\sigma_{x}(X)=\{\lambda_{0}^{\m_{x}(\lambda_{0})}, \mu_{1}^{\m_{x}(\mu_{1})}, \dots, \mu_{d_{x}}^{\m_{x}(\mu_{d_{x}})}\}$. 
Then, we have the following corollary immediately by Corollary \ref{Ihara formula for a regular graph}. 
\begin{corollary}
For $\left| u \right| < \frac{1}{\alpha}$, we have 
\begin{align*}
Z_{X}(u, x_{0})
&=(1-u^2)^{-\frac{q-1}{2}}
	\prod_{\lambda \in \sigma_{x_{0}}(\Delta_{X})}
	\big(1-(q+1-\lambda)u+qu^2\big)^{-m_{x_{0}}(\lambda)}\\
	&\quad \times\exp\bigg(\sum_{m=3}^{\infty}\frac{R_{m}(x_{0})}{m}u^m\bigg). 
\end{align*}
\end{corollary}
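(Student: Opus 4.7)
The plan is to specialize the previous corollary to the case $x = x_0$ and then rewrite the spectral integral as a finite product using the definition of the local spectrum. Since $X$ is assumed to be a finite $(q+1)$-regular graph, the operator $\Delta_X$ acts on the finite-dimensional Hilbert space $\ell^2(V\!X)$ and therefore has a discrete spectral decomposition
\begin{align*}
\Delta_X = \sum_i \lambda_i E_{\lambda_i},
\end{align*}
where the $E_{\lambda_i}$ are the primitive (orthogonal) idempotents onto the eigenspaces. The corresponding spectral measure $\mu_{x_0, x_0}(\cdot) = \langle E(\cdot)\delta_{x_0}, \delta_{x_0}\rangle$ is then a pure-point measure, namely $\mu_{x_0, x_0} = \sum_i (E_{\lambda_i})_{x_0 x_0}\, \delta_{\lambda_i}$, where $\delta_{\lambda_i}$ denotes the Dirac mass at $\lambda_i$.

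First I would set $x = x_0$ in Corollary \ref{Ihara formula for a regular graph}. The Kronecker delta $\delta_{x_0}(x_0)$ equals $1$, so the first factor simplifies to $(1-u^2)^{-(q-1)/2}$. By the remark following the definition of $Z_X(u, x_0, x)$, we also have $Z_X(u, x_0, x_0) = Z_X(u, x_0)$, matching the left-hand side.

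Next I would rewrite the spectral integral. By the point-mass decomposition above,
\begin{align*}
\int_{\sigma(\Delta_X)} \log\big(1 - ((q+1)-\lambda)u + qu^2\big)\, d\mu_{x_0, x_0}(\lambda)
= \sum_i (E_{\lambda_i})_{x_0 x_0} \log\big(1 - ((q+1)-\lambda_i)u + qu^2\big).
\end{align*}
By the very definition of the $x_0$-local multiplicity, $(E_{\lambda_i})_{x_0 x_0} = \m_{x_0}(\lambda_i)$, and only those eigenvalues with $\m_{x_0}(\lambda_i) > 0$ contribute. These are exactly the elements of $\sigma_{x_0}(\Delta_X)$. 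Taking $\exp(-\,\cdot\,)$ of both sides converts the sum into the asserted finite product
\begin{align*}
\prod_{\lambda \in \sigma_{x_0}(\Delta_X)} \big(1 - (q+1-\lambda)u + qu^2\big)^{-\m_{x_0}(\lambda)}.
\end{align*}
The third factor, $\exp\big(\sum_{m=3}^{\infty} R_m(x_0)/m\, u^m\big)$, carries over unchanged, so multiplying the three factors together yields the claimed formula. There is no real obstacle here; the only conceptual point is the identification of the diagonal spectral measure with the weighted sum of local multiplicities, which is immediate for a finite-dimensional self-adjoint operator with its spectral decomposition in terms of primitive idempotents.
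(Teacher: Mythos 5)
Your proposal is correct and is exactly the argument the paper intends: the paper derives this corollary "immediately" from Corollary \ref{Ihara formula for a regular graph} by specializing to $x=x_0$ and identifying the diagonal spectral measure $\mu_{x_0,x_0}$ of the finite-dimensional Laplacian with the sum of Dirac masses weighted by the local multiplicities $\m_{x_0}(\lambda_i)=(E_{\lambda_i})_{x_0x_0}$, which turns the spectral integral of the logarithm into the stated finite product over $\sigma_{x_0}(\Delta_X)$.
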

We remark that this formula holds for a regular graph which is not always a simple graph by Proposition \ref{Nformula} and the formula obtained in p.~$188$ in \cite{CJK1}. 
This formula is regarded as a local version of the original Ihara determinant formula since the above equation gives the explicit relationship between the number of closed geodesics starting at $x_{0}$ and the $x_{0}$-local spectrum of the Laplacian of $X$. 

\section*{Acknowledgment}
The author expresses gratitude to Professor Hiroyuki Ochiai for his many helpful comments.

\end{document}